\documentclass[12pt]{amsart}
\usepackage{amsmath}
\usepackage{amsfonts}
\usepackage{amssymb}
\usepackage{a4wide}
\usepackage{comment}
\usepackage{bookmark}
\usepackage{graphicx}
\usepackage{xcolor}

\def \F {{\mathbb F}}

\def \cV {{\mathcal V}}
\def \cB {{\mathcal B}}
\def \aind {\mathrm{AddInd}}
\def \codim {\mathrm{codim}\,}
\def \im {\mathrm{im}\,}

\def \crk {\mathrm{Crk}}
\def \Crk {\mathrm{Crk}}
\def \w {\mathrm{w}}

\usepackage{todonotes}

\newtheorem{theorem}{Theorem}

\newtheorem{lemma}{Lemma}

\newtheorem{proposition}{Proposition}
\newtheorem{conj}{Conjecture}
\theoremstyle{remark}

\newtheorem*{remark}{Remark}
\newtheorem*{remarks}{Remarks}
\newcommand{\thmtext}{Suppose a permutation $f$ of $\F_q$ has Carlitz rank $r>0$.
    Then its additive index is at least $\min \left\{\frac{q}{r},\frac{q-2r}{2}\right\}$.
    Further if $r=1$ and $q> 4$, then $\aind(f)=q$.}

\date{\today}

\begin{document}
\title{Additive index and Carlitz rank}

\author{Pierre-Yves Bienvenu}

\author{Arne Winterhof}

\email{\{pierre.bienvenu,arne.winterhof\}@oeaw.ac.at}

\address{Johann Radon Institute for Computational and Applied Mathematics, Austrian Academy of Sciences, Linz, Austria}

\begin{abstract}
We compare several complexity measures for self-mappings of finite fields. In particular,
we show that Carlitz rank and additive index cannot be small simultaneously up to trivial exceptions. That is, these two measures detect cryptographic weaknesses of different classes of functions.

We also study the relationship between additive index and degree or weight, respectively,
complementing earlier results of Aksoy et al.\ and Gómez-P\'erez et al.\ on the relationship between Carlitz rank and degree or weight, respectively. 

Finally, we show that a function closely related to the discrete logarithm provides an example in which all four complexity measures, degree, weight, additive index and Carlitz rank, are large. 
\end{abstract}

\maketitle

\section{Introduction}
For the whole paper, we fix a prime number $p$ and an integer $n\ge 1$ and put $q=p^n$. We denote by $\F_q$ the finite field of $q$ elements.

Several complexity measures such as degree, weight and multiplicative index for self-mappings of $\F_q$ are relevant in cryptography.
An interesting question is to determine to which extent they can be simultaneously small,
in other words, whether a function can be  simple from two different viewpoints at once.
Given a function $f:\F_q\rightarrow\F_q$, we will consider the following complexity measures: the {\em degree} $\deg (f)$ -- which
is the degree of the unique polynomial 
$$P_f(x)=\sum_{i=0}^{q-1}a_ix^i\in\F_q[x]$$ of degree
at most $q-1$ coinciding with $f$ on $\F_q$; the {\em weight} (or {\em sparsity}) $\w(f)$, which is the number of nonzero coefficients of $P_f$; and finally the {\em Carlitz rank} and the {\em additive index}, which we will now define.

\subsection{The Carlitz rank}
The Carlitz rank was introduced by Aksoy et al.\ in \cite{ak09} and is defined as follows.
By a well-known result of Carlitz \cite{ca53}, the group of permutations (under composition) of $\F_q, q\geq 3$, is generated by the set consisting of the inversion 
$$x\mapsto x^{q-2}=\left\lbrace\begin{array}{cc}
0    ,    &x=0  \\
    x^{-1} & x\neq 0
\end{array} \right.$$ 
and all affine-linear maps $x\mapsto\alpha x+\beta$, $\alpha,\beta\in\F_q,\alpha\neq 0$.
Accordingly, the {\em Carlitz rank} $\crk(f)$ of a permutation $f$ of $\F_q$ is the minimum number of inversions required to write $f$ as a composition of the inversion and affine maps. In other words, any permutation $f$ of~$\F_q$ may be represented by a polynomial of the form
$$    P_n(a_0,a_1,\ldots,a_{n+1};x)=\left(\cdots ((a_0x+a_1)^{q-2}+a_2)^{q-2}+\cdots+a_n\right)^{q-2} +a_{n+1}
$$
where $a_i\neq 0$ for $i=0,2,\ldots,n$ (note that $a_1a_{n+1}$ may be 0)
and $\Crk(f)$ is the smallest integer $n\geq 0$ making such a representation possible.
In particular $\Crk(f)=0$ if, and only if, $f$ is an affine-linear map, that is, of the
form $f(x)= ax+b$.

For a survey on Carlitz rank see \cite{to14}.

\subsection{The additive index}
This notion was introduced by Reis and Wang in \cite{rewa22}.
A polynomial over $\F_{q}$ of the form
$$M(X)=\sum_{j=0}^{n-1}a_jX^{p^j},\quad a_j \in \F_q,$$
is called {\em linearised} (or {\em $p$-polynomial}).
Note that $M(X)$ is an $\F_p$-linear map if we consider~$\F_{q}$ a vector space, that is,
$$M(a\xi+b\zeta)=aM(\xi)+bM(\zeta),\quad a,b\in \F_p,\quad \xi,\zeta \in \F_{q}.$$
In fact a simple counting argument shows that any $\F_p$-endomorphism of $\F_q$ ($\F_p$-linear self-mapping of $\F_q$) is induced by a linearised polynomial.
See \cite{biwi26,rewa22} and the references therein for more details, background and motivation.

A mapping $f:\F_q\rightarrow\F_q$ is of {\em additive index at most $p^k$}
and {\em codimension} at most $k$ 
if there is a linearised polynomial $M(X)\in \F_{q}[X]$ 
and a linear subspace $U_0$ of dimension $n-k$ with disjoint cosets~$U_i$, $i=0,1,\ldots,p^k-1$, such that
$$f(\xi)=M(\xi)+a_i,\quad \xi\in U_i,\quad i=0,1,\ldots,p^k-1,$$
for some $a_0,a_1,\ldots,a_{p^k-1}\in \F_{q}$, see  \cite[Definition~4.2]{rewa22}.
The codimension of $f$, denoted $\codim f$, is then the smallest $k$ such that $f$ is of codimension at most $k$,
and the additive index is $\aind(f)=p^{\codim f}$.

\begin{remark}
    The counting argument alluded to above reveals that $M$ may be assumed to have degree 
    at most
    $p^{n-k-1}$, see \cite[Corollary 7]{biwi26}.
\end{remark}

\subsection{Previous results on the relation between these complexity measures}
Let~$f$ be a permutation of $\F_q$ and assume $\deg(f)\ge 2$.
Aksoy et al.\ \cite[Theorem 4]{ak09} showed that for any permutation $f$ of $\F_q$
\begin{equation}\label{deg}
  \crk(f)\ge q-\deg(f)-1,\quad \deg(f)>1.
\end{equation}
Thus, the Carlitz rank and the degree cannot be small simultaneously, except for affine maps.

Further, under the assumption that $\deg(f)>1$ and $f(x)$ is not of the form 
$$f(x)=a+bx^{q-2}, \quad b\not=0,$$ 
Gómez-P\'erez et al. \cite[Theorem 4]{goosto14} proved that
\begin{equation}\label{weight}
\Crk(f)>\frac{q}{\w(f)+2},\quad \Crk(f)>0,\quad f(x)\not=a+bx^{q-2}.
\end{equation}
Again, Carlitz rank and weight cannot be small simultaneously, except for $\Crk(f)\in \{0,1\}$.

\subsection{New results in this paper}
First, we prove that a permutation of degree at least $2$ and small positive Carlitz rank necessarily has high additive index. In particular, we have
\begin{equation}
\label{rankadd}
\aind(f)> \frac{q}{\crk(f)+1}-1,\quad \deg(f)>1,
\end{equation}
which follows from the more precise Theorem~\ref{th:lowrank} in Section~\ref{main} below.


Additionally, we prove results relating the additive index to the degree and the weight.
Indeed, we prove that any self-mapping $f$ of $\F_q$ of positive codimension
satisfies 
$$\deg(f)\aind(f)\geq q,$$ 
and this bound is sharp, see Section~\ref{degree}.
The relation weight--additive index is more complex. In contrast to other measures,
we find that the weight is also small when the additive index is small; but when the additive index is large, the weight can be more or less anything, see Section~\ref{secweight}.

In Section~\ref{log} we show that a permutation of $\F_q$ which can be identified with the discrete logarithm, and which was already known to have large
degree, large weight and large additive index, also has large Carlitz rank, providing an explicit, cryptographically relevant example of a permutation having all four complexity measures studied here large simultaneously.

In Section~\ref{multind} we recall the known analogues of our results for the multiplicative index and state a conjecture about the relation between additive and multiplicative index.

\section{Preliminaries}
We provide some results that will be needed later.
\subsection{Weight and zeros}

We start with \cite[Lemma 2.5]{sh03}.
\begin{lemma}\label{wt}
Let $f(x)\in \F_q[x]$ be a non-zero polynomial of degree at most $q-2$
and of weight $t\ge 1$. Then there are at least $\frac{q - 1}{t}$ values of $c\in \F_q^*$
with $f(c)\ne 0$.
\end{lemma}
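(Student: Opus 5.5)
We would write $f(x)=\sum_{i=1}^{t}a_ix^{e_i}$ with $a_i\neq 0$ and distinct exponents $0\le e_1<\dots<e_t\le q-2$. The plan is to fix a generator $g$ of the cyclic group $\F_q^*$ and study the sequence $u_k=f(g^k)$, $k\in\mathbb{Z}$. This sequence is periodic with period $q-1$, and the elements $g^k$, $k=0,\dots,q-2$, run through $\F_q^*$ exactly once. So the quantity to bound is $N$, the number of $k\in\{0,\dots,q-2\}$ with $u_k\neq 0$. The hypothesis $\deg f\le q-2$ is used only to ensure that the ratios $\beta_i=g^{e_i}$ are pairwise distinct.

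The key step is to show that every window of $t$ consecutive terms $u_k,u_{k+1},\dots,u_{k+t-1}$ contains a nonzero term. We have
$$u_{k+j}=\sum_{i=1}^t \bigl(a_i g^{ke_i}\bigr)\beta_i^{\,j},\qquad j=0,\dots,t-1 .$$
Hence the vector $(u_{k+j})_{j=0}^{t-1}$ equals a $t\times t$ Vandermonde matrix in $\beta_1,\dots,\beta_t$ applied to the coefficient vector $(a_ig^{ke_i})_{i=1}^t$. Since the $\beta_i$ are distinct, the matrix is invertible. Since every $a_ig^{ke_i}\neq 0$, the coefficient vector is nonzero, so the output vector is nonzero as well.

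Finally we double count, treating the indices cyclically modulo $q-1$. Consider the $q-1$ cyclic windows $\{k,k+1,\dots,k+t-1\}$ for $k\in\mathbb{Z}/(q-1)$. By the previous step each window contains at least one index with $u\neq 0$. Conversely, each such index lies in exactly $t$ windows when $t\le q-1$, which holds because $t$ is at most the number of possible exponents $0,\dots,q-2$. Therefore $tN\ge q-1$, that is, $N\ge (q-1)/t$, which is the claim.

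The only delicate point is the Vandermonde step. It requires the distinctness of the $g^{e_i}$, which is exactly where the degree restriction enters, and it requires that the shifted coefficients $a_ig^{ke_i}$ all remain nonzero. The counting step is routine.
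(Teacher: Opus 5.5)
Your proof is correct: the degree bound makes the $g^{e_i}$ pairwise distinct, the Vandermonde system on each window of $t$ consecutive exponents is invertible, and the cyclic double count (each index lies in exactly $t$ windows since $t\le q-1$) gives $tN\ge q-1$. The paper gives no argument of its own and simply quotes \cite[Lemma 2.5]{sh03}; your sliding-window Vandermonde argument is the standard proof of that fact, and it is the cyclic averaging, rather than a split into disjoint blocks, that yields $(q-1)/t$ instead of only $\lfloor (q-1)/t\rfloor$.
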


\subsection{Interpolation by linear rational functions}
 The following result from \cite[Section 2]{to14} is useful
 for understanding permutations of low Carlitz rank. 
 \begin{lemma}
 \label{cr}
     Let $f$ be a permutation of $\F_q$ of Carlitz rank $r$. Then
 there are $\alpha,\beta,\gamma,\delta \in \F_q$ such that
    $$f(x)=\frac{\alpha x+\beta}{\gamma x+\delta}$$
    for at least $q-r$ different $x\in \F_q$.
 \end{lemma}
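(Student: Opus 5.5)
The plan is to induct on the number of inversions. We track, alongside the polynomial representation, the corresponding Möbius transformation on the projective line $\F_q\cup\{\infty\}$. Fix a representation $f=P_r(a_0,\dots,a_{r+1};x)$ with $r=\Crk(f)$; in fact the argument works for any representation with $r$ inversions. For $0\le k\le r$ set
$$P_k(x)=P_k(a_0,\dots,a_{k+1};x)$$
for the truncated compositions. Define the rational functions
$$R_0(x)=a_0x+a_1,\qquad R_k(x)=\frac{1}{R_{k-1}(x)}+a_{k+1}\quad (1\le k\le r).$$
Each $R_k$ is a composition of the Möbius maps $x\mapsto a_0x+a_1$, $x\mapsto 1/x$ and $x\mapsto x+a_{j}$. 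Hence $R_k$ corresponds to a product of invertible $2\times 2$ matrices, namely $\begin{pmatrix}a_0&a_1\\0&1\end{pmatrix}$ and $\begin{pmatrix}a_{j}&1\\1&0\end{pmatrix}$. So $R_k(x)=\frac{\alpha_k x+\beta_k}{\gamma_k x+\delta_k}$ with $\alpha_k\delta_k-\beta_k\gamma_k\ne 0$, and $(\alpha,\beta,\gamma,\delta)=(\alpha_r,\beta_r,\gamma_r,\delta_r)$ will be the required coefficients.

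The key claim, proved by induction on $k$, is that there is a set $B_k\subseteq\F_q$ with $|B_k|\le k$ such that for every $x\notin B_k$ the value $R_k(x)$ is finite (i.e.\ $\gamma_k x+\delta_k\neq 0$) and $P_k(x)=R_k(x)$. For $k=0$ take $B_0=\emptyset$. For the inductive step, note that $P_{k-1}$ is a composition of permutations of $\F_q$, hence a permutation, so there is exactly one $x_0\in\F_q$ with $P_{k-1}(x_0)=0$. Put $B_k=B_{k-1}\cup\{x_0\}$, so that $|B_k|\le k$.

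For $x\notin B_k$ we have $R_{k-1}(x)=P_{k-1}(x)\in\F_q^*$. Since $x^{q-2}$ agrees with $x^{-1}$ on $\F_q^*$, this gives
$$P_k(x)=P_{k-1}(x)^{q-2}+a_{k+1}=R_{k-1}(x)^{-1}+a_{k+1}=R_k(x),$$
and this value is finite. Taking $k=r$ yields agreement of $f$ with $\frac{\alpha x+\beta}{\gamma x+\delta}$ on at least $q-r$ points.

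The only delicate step is the bookkeeping at the exceptional points: $R_{k-1}(x)$ may be $\infty$ or $0$, whereas $P_{k-1}$ always takes values in $\F_q$. The first case is absorbed into $B_{k-1}$ by the inductive hypothesis. The second case can occur at most at the single zero $x_0$ of the permutation $P_{k-1}$, and this is exactly why each inversion costs at most one point.
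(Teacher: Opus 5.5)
Your proof is correct. It is essentially the standard argument behind this lemma, which the paper does not prove itself but quotes from \cite{to14}: your matrix product is the continued-fraction recurrence for the coefficients of $R_r$, and your exceptional set $B_r$ is exactly the set of finite poles of the intermediate fractions $R_1,\dots,R_r$, since the denominator of $R_k$ is the numerator of $R_{k-1}$. As a small bonus, your formulation gives $\alpha\delta\neq\beta\gamma$ for free, so the degenerate case treated separately in the proof of Theorem~\ref{th:lowrank} can simply be dropped.
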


\subsection{Equivalent definition of the codimension of a mapping}

For the following, see also \cite{rewa22}.
\begin{proposition}
\label{th:coset}
Let $U$ be a subspace of $\F_q$.
    The polynomial 
    $$h(x)=\prod_{u\in U}(x-u)\in\F_q[x]$$ 
    is a linearised polynomial.
    Further, any function $f:\F_q\rightarrow\F_q$ which is constant on each coset of $U$
    may be written $f=g\circ h$, where $\deg g<\frac{q}{|U|}$.
\end{proposition}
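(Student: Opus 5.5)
We prove both assertions of Proposition~\ref{th:coset} directly. The first is the classical fact that the subspace polynomial of an $\F_p$-subspace is linearised. The second is a counting/interpolation argument over the quotient.

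\emph{Step 1: $h$ is linearised.} We argue by induction on $\dim U$. For $U=\{0\}$ we have $h(x)=x$. Suppose $U=W\oplus\F_p w$ with $h_W(x)=\prod_{u\in W}(x-u)$ already linearised, hence $\F_p$-linear as a map. Then
$$h_U(x)=\prod_{c\in\F_p}h_W(x-cw)=\prod_{c\in\F_p}\bigl(h_W(x)-c\,h_W(w)\bigr)=h_W(x)^p-h_W(w)^{p-1}h_W(x),$$
using $\prod_{c\in\F_p}(y-c\beta)=y^p-\beta^{p-1}y$. The right-hand side is a composition and sum of linearised polynomials, so it is linearised. This step is routine; the only point to check is that $h_W(w)\neq0$, which holds because $w\notin W$.

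\emph{Step 2: factorisation through $h$.} Since $h$ is $\F_p$-linear with kernel exactly $U$ (its roots are the elements of $U$, each simple), it induces an injection $\F_q/U\to\F_q$. Its image $V=h(\F_q)$ therefore has exactly $m=q/|U|$ elements. Let $f$ be constant on cosets of $U$. Then $f$ factors as $f=\tilde g\circ h$ for a well-defined function $\tilde g:V\to\F_q$, namely $\tilde g(h(x))=f(x)$. By Lagrange interpolation on the $m$ points of $V$, there is a polynomial $g\in\F_q[x]$ of degree at most $m-1<q/|U|$ agreeing with $\tilde g$ on $V$. Consequently $f(x)=g(h(x))$ for all $x\in\F_q$.

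The main (mild) obstacle is Step 1, specifically making sure the product identity over $\F_p$ is applied correctly. The second part is immediate once the kernel of $h$ is identified as $U$, which holds because $h$ has exactly the elements of $U$ as its roots.
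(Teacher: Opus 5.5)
Your proof is correct, but it proceeds in the opposite order from the paper and gets linearity by a different mechanism.

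The paper does not induct on $\dim U$. It first reads off from the product form that $h$ is constant on each coset of $U$: for $u'\in U$ we have $h(x+u')=\prod_{u\in U}(x-(u-u'))$, and $u-u'$ runs over $U$. It then observes that $h-h(v)$ has degree $|U|$ and already vanishes on the $|U|$ points of $v+U$, so distinct cosets receive distinct values. This yields the factorisation $f=g\circ h$ without any use of linearity. Linearity is then a pure root count: for fixed $y$, the polynomial $h(x+y)-h(x)-h(y)$ has degree $<|U|$ and vanishes on $U$, hence is zero.

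You instead establish linearity first, through the classical recursion $h_U=h_W^p-h_W(w)^{p-1}h_W$. You then get injectivity on $\F_q/U$ from the first isomorphism theorem, using that the kernel of $h$ is exactly $U$.

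What each route buys:
\begin{itemize}
\item Your route exhibits the $p$-polynomial shape of $h$ directly and gives a recursion for its coefficients.
\item The paper's root count gives additivity, and implicitly relies on the correspondence between additive maps and linearised polynomials to conclude.
\item The paper's argument is shorter, avoids the induction, and makes the factorisation statement independent of the linearity statement.
\end{itemize}
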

\begin{proof}
    Naturally $h$ is constant on each coset of $U$ and is of degree $|U|$. Therefore
    $h-h(v)$ for any $v\in\F_q$, does not vanish outside $U+v$; thus two distinct cosets
    of $U$ are mapped to two distinct values by $h$. This implies that any function which
     is constant on each coset of $U$
    may be written $f=g(h(x))$. Also note that $\{h(x):x\in\F_q\}$ has cardinality $q/|U|$ (the number of cosets of $U$), so $g$ may be chosen to have degree $<q/|U|$ by Lagrange interpolation.

    We now show that $h$ is linearised.
    Fix $y\in\F_q$. We want to show that the polynomial
    $$\ell(x)=h(x+y)-h(x)-h(y)$$ 
    is the zero polynomial.
    To do this, it suffices to observe that $\deg \ell<\deg h=|U|$ and $\ell$ vanishes on $U$.
\end{proof}
\begin{proposition}\label{alt}
        Given a self-mapping $f$ of $\F_q$ and an integer $k\geq 0$, the following are equivalent.
    \begin{enumerate}
        \item $f$ has codimension at most $k$.
        \item There exist linearised polynomials $M,L\in\F_q[x]$ 
        such that 
        \begin{itemize}
            \item 
        $M$ splits completely over $\F_q$ and $\deg M\geq p^{n-k}$,
        \item  $\deg L<\deg M$,
        \item $f(x)=g(M(x))+L(x)$ where $g$ is of degree at most $p^{k}-1$. \end{itemize}
    \end{enumerate}
\end{proposition}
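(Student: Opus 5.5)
We prove the two implications separately. Proposition~\ref{th:coset} will turn functions that are constant on cosets into polynomials in the subspace polynomial, and a division-with-remainder argument will recover the affine structure in the converse direction.

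\emph{(1) $\Rightarrow$ (2).} Suppose $f$ has codimension at most $k$. Then there are a linearised polynomial $N$, a subspace $U_0$ of dimension $n-k$, and constants $a_i$ with $f(\xi)=N(\xi)+a_i$ on the coset $U_i$.

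First we reduce $N$ modulo $x^q-x$ so that its degree is at most $p^{n-1}$; it stays linearised and induces the same map. Put $M(x)=\prod_{u\in U_0}(x-u)$. By Proposition~\ref{th:coset}, $M$ is linearised, it splits over $\F_q$, and $\deg M=p^{n-k}$. The function $f-N$ is constant on each coset of $U_0$, so Proposition~\ref{th:coset} gives $f-N=g\circ M$ with $\deg g<p^k$.

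It remains to arrange $\deg L<\deg M$. For this we use the standard fact that linearised polynomials admit a right symbolic division. We write $N=A\circ M+L$ with $A,L$ linearised and $\deg L<\deg M$. This can be done by repeatedly subtracting terms $c\,M(x)^{p^j}=c\,M^{p^j}(x)$, which are linearised, to kill the leading term of $N$.

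Then $f=(g+A)\circ M+L$ as functions on $\F_q$. The degree of $g+A$ may exceed $p^k-1$, but only the values of $g+A$ on $\im M$ matter. The set $\im M$ has size $q/|U_0|=p^k$, so Lagrange interpolation on $\im M$ replaces $g+A$ by a polynomial $\tilde g$ of degree at most $p^k-1$ with $f=\tilde g\circ M+L$.

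\emph{(2) $\Rightarrow$ (1).} Suppose $f=g\circ M+L$ with $M,L$ linearised, $M$ split, and $\deg M\ge p^{n-k}$. Since $M$ splits completely and $M$ is additive, its root set $K=\ker M$ is an $\F_p$-subspace with $|K|\geq p^{n-k}$. We may assume $\deg M\le q$ by reduction, so that $M$ is separable and the root count equals the degree.

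Choose a subspace $U_0\subseteq K$ of dimension exactly $n-k$. On each coset $U_i=U_0+v_i$ we have $M(x)=M(v_i)$, hence $f(x)=L(x)+g(M(v_i))$ there. This is exactly the form required for codimension at most $k$, with $M$ of the definition taken to be $L$ and $a_i=g(M(v_i))$.

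\emph{Main obstacle.} The delicate point is the separability of $M$, which is needed for the root count to equal $\deg M$. A split linearised $M$ with $M'(0)=0$ would have repeated roots. If the definition of ``splits completely'' intends distinct roots, there is no issue. Otherwise we note that $M=M_1^{p^s}$ with $M_1$ separable, so $\ker M=\ker M_1$, and we take $M_1$ as the subspace polynomial of the kernel. This is harmless since it only changes $g$ by a Frobenius twist, which keeps the degree below $p^k$ after interpolation on $\im M$.
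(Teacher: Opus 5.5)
Your proof is correct if ``$M$ splits completely'' is read as ``$M$ has $\deg M$ distinct roots in $\F_q$''. The paper uses exactly this reading implicitly when it writes $|U|=\deg M$ for $U=\ker M$.

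Your direction (2)$\Rightarrow$(1) is the paper's argument. Passing to a subspace $U_0\subseteq\ker M$ of dimension exactly $n-k$ is a harmless extra precision to match the definition.

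In (1)$\Rightarrow$(2) you reach $\deg L<\deg M$ by a different route. The paper invokes the Remark after the definition of the additive index (a counting argument cited from the literature) to get a linear part of degree at most $p^{n-k-1}$, and then applies Proposition~\ref{th:coset} once. You instead start from an arbitrary linear part $N$ and divide it on the right by the subspace polynomial, $N=A\circ M+L$. You then absorb $A\circ M$, which is constant on the cosets of $U_0$, into $g\circ M$. This makes your argument self-contained and in effect reproves the cited degree reduction, showing why it holds; the paper's version is shorter. Your final re-interpolation is not even needed: since you reduced $N$ to degree at most $p^{n-1}$, the quotient satisfies $\deg A\le p^{k-1}\le p^k-1$, and $A=0$ when $k=0$.

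Your remarks on separability, however, are wrong and should be removed.
\begin{itemize}
\item Reducing to $\deg M\le q$ does not make $M$ separable, since $x^p$ already has degree at most $q$.
\item The fallback of replacing $M=M_1^{p^s}$ by $M_1$ fails, because $|\ker M_1|=\deg M/p^s$ can be smaller than $p^{n-k}$.
\end{itemize}
In fact the equivalence is false for inseparable split $M$. Take $p$ odd, $n\ge 2$, $k=n-1$, and $M(x)=x^p$, $g(y)=y^2$, $L=0$. Then (2) holds, but $f(x)=x^{2p}$ has codimension $n$. Indeed, by Lemma~\ref{lm:compo} it has the same codimension as $x\mapsto x^2$. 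If $x^2=L'(x)+a_{U+x}$ with $L'$ additive, then $2xu+u^2=L'(u)$ for all $x\in\F_q$ and $u\in U$, which forces $U=\{0\}$. So separability of $M$ has to be part of the hypothesis; it cannot be arranged by a Frobenius twist.
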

\begin{proof}
    If $f$ has codimension at most $k$, there exists a subspace $U\leq \F_q$ of codimension at most $k$ and a linearised polynomial $L$ of degree at most $p^{n-k-1}$ such that
    $f(x)-L(x)$ is constant on every coset of $U$. Therefore
    by Proposition \ref{th:coset}, we have $f(x)=L(x)+g(M(x))$, where $\deg g\leq p^k-1$ and $M$ is linearised of degree $p^{n-k}$.

    Conversely, suppose that there exist linearised polynomials $M,L\in\F_q[x]$ with the properties as above. Then consider the subspace $U=\ker M$; then $|U|=\deg M\geq p^{n-k}$, hence $U$ has codimension at most $k$. Then $g\circ M$ is constant on each coset of $U$. Thus $f(x)=L(x)+a_{U+x}$ for some mapping $a:\F_q/U\rightarrow \F_q$.
\end{proof}
\subsection{Behaviour of the additive index under composition}
Finally, we state a useful simple lemma about the behaviour of the additive index with respect to composition. 
\begin{lemma}
\label{lm:compoGen}
    Let $f,g,h$ be self-mappings of $\F_q$ such that $f=g\circ h$. Then the additive index of $f$ is no larger than the product of the indices of $g$ and $h$, or equivalently,
    $$\codim f\leq \codim g+\codim h.$$
\end{lemma}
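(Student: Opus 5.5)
The plan is to unwind the definition of codimension and intersect the two decompositions; the only freedom in the choice of $U$ is which affine representatives we pick. Suppose $g$ has codimension $k_g$ and $h$ has codimension $k_h$. Then there are a subspace $U_g$ of codimension $k_g$ and a linearised $M_g$ with $g-M_g$ constant on each coset of $U_g$. Likewise there are $U_h$ of codimension $k_h$ and a linearised $M_h$ with $h-M_h$ constant on each coset of $U_h$.

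I would set $M=M_g\circ M_h$, which is again $\F_p$-linear, hence induced by a linearised polynomial by the counting remark in the introduction. I would also put
$$U=U_h\cap M_h^{-1}(U_g).$$
Here $M_h^{-1}(U_g)$ is the preimage of $U_g$ under the linear map $M_h$, so it has codimension at most $k_g$. Therefore $U$ is a subspace of codimension at most $k_h+k_g$.

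The key step is to check that $f-M$ is constant on each coset of $U$. Take $\xi,\zeta$ in the same coset, so $\xi-\zeta\in U$.
\begin{enumerate}
\item Since $\xi-\zeta\in U_h$, we get $h(\xi)=M_h(\xi)+c$ and $h(\zeta)=M_h(\zeta)+c$ for the same constant $c$.
\item Then $h(\xi)-h(\zeta)=M_h(\xi-\zeta)\in U_g$, so $h(\xi)$ and $h(\zeta)$ lie in the same coset of $U_g$.
\item Hence $g(h(\xi))=M_g(h(\xi))+d$ and $g(h(\zeta))=M_g(h(\zeta))+d$ for a common constant $d$.
\item So $f(\xi)-f(\zeta)=M_g(h(\xi)-h(\zeta))=M_g(M_h(\xi-\zeta))=M(\xi)-M(\zeta)$, which is exactly the required constancy.
\end{enumerate}

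If $U$ has codimension strictly less than $k_g+k_h$, we can enlarge the count trivially, since codimension at most $k$ implies codimension at most any $k'\ge k$. Formally, one may pass to any subspace of $U$ of the exact codimension $k_g+k_h$, and constancy on cosets is inherited by the finer cosets. This yields $\codim f\le \codim g+\codim h$, and exponentiating gives $\aind(f)\le\aind(g)\aind(h)$.

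I expect no real obstacle. The only point needing care is that the preimage under a possibly non-injective linear map still has codimension at most $k_g$, and this holds because $\F_q/M_h^{-1}(U_g)$ embeds into $\F_q/U_g$.
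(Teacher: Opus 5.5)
Your proof is correct and is essentially the paper's argument: the same subspace $U_h\cap M_h^{-1}(U_g)$ and the same linear part $M_g\circ M_h$, with your quotient embedding $\F_q/M_h^{-1}(U_g)\hookrightarrow \F_q/U_g$ taking the place of the paper's dimension count via $M^{-1}(U)/\ker M\cong \im M\cap U$. (Your last step, passing to a subspace of exact codimension $k_g+k_h$, is unnecessary, and it is impossible when $k_g+k_h>n$; in that case the bound is trivial anyway, and otherwise you can simply use $U$ itself.)
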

\begin{proof}
    By definition of the additive index, there exist $\F_p$-linear maps $L,M$, subspaces $U,V$ of $\F_q$ of codimension $\codim g,\codim h$ respectively and
    maps $a:\F_q/U\rightarrow\F_q,b:\F_q/V\rightarrow\F_q$
    such that $g(x)=L(x)+a(U+x)$ and $h(x)=M(x)+b(V+x)$.

    Therefore \begin{eqnarray*}
        f(x)&=&g(h(x)) =L(M(x)+b(V+x))+a(U+M(x)+b(V+x))\\
        &=&L(M(x))+L(b(V+x))+a(U+M(x)+b(V+x)).
    \end{eqnarray*}
    Now $U+M(x)$ depends only on $M^{-1}(U)+x$. Indeed,
    since $M(M^{ -1}(U))\subset U$, $M$ induces a map $\overline{M}:\F_q/M^{ -1}(U)\rightarrow\F_q/U $ satisfying $\overline{M}(M^{ -1}(U)+x)=U+M(x)$.
    Therefore, we may write 
    $$f(x)=N(x)+c(x+W)$$ where $W=M^{ -1}(U)\cap V$
    for some map $c:\F_q/W\rightarrow\F_q$ and some $\F_p$-linearised polynomial $N=L\circ M$.
    Now $$\codim W\leq\codim M^{ -1}(U)+\codim V.$$
    Further $$\codim M^{-1}(U)\leq \codim U;$$ this is because 
    $$\dim M^{-1}(U)=\dim\ker M+\dim \im M\cap U,$$ which can be seen from the fact that $M$ induces an isomorphism 
    $$M^{-1}(U)/\ker M\rightarrow \im M\cap U,$$ and
    $$\dim \im M\cap U\geq \dim U-\codim \im M=\dim U-\dim \ker M.$$ We conclude
    $$\codim f\leq \codim M^{ -1}(U)\cap V\leq \codim U+\codim V$$ as desired.
\end{proof}
The case of the lemma that will occur in the paper is the one where $g$ or $h$ is a bijective
$\F_p$-affine map (in practice, they will even be $\F_q$-affine map).
\begin{lemma}
\label{lm:compo}
    Let $f,g,h$ be self-mappings of $\F_q$ such that $f=g\circ h$.
    If $g$ is a bijective $\F_p$-affine map,
    then $$\codim f=\codim h.$$  If $h$ is a bijective $\F_p$-affine map,
    then $$\codim f=\codim g.$$
\end{lemma}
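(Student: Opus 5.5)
The plan is to derive both equalities from Lemma~\ref{lm:compoGen} together with the observation that a bijective $\F_p$-affine map has codimension $0$ and an inverse of the same kind. Let $A$ be a bijective $\F_p$-affine map, $A(x)=L(x)+c$ with $L$ an $\F_p$-linear bijection of $\F_q$ and $c\in\F_q$. Taking the subspace $U_0=\F_q$, which has codimension $0$ and a single coset, we get $A(\xi)=L(\xi)+c$ on that coset. So $A$ has codimension at most $0$, hence $\codim A=0$ and $\aind(A)=1$. The inverse $A^{-1}(y)=L^{-1}(y)-L^{-1}(c)$ is again a bijective $\F_p$-affine map, since $L^{-1}$ is $\F_p$-linear and therefore induced by a linearised polynomial. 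Consequently $\codim A^{-1}=0$ as well.

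For the first claim, suppose $f=g\circ h$ with $g$ a bijective $\F_p$-affine map. Lemma~\ref{lm:compoGen} gives $\codim f\le \codim g+\codim h=\codim h$. Conversely, $h=g^{-1}\circ f$, and applying Lemma~\ref{lm:compoGen} again gives $\codim h\le \codim g^{-1}+\codim f=\codim f$. The two inequalities together yield $\codim f=\codim h$.

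The second claim is symmetric. If $h$ is a bijective $\F_p$-affine map, then $\codim f\le\codim g$, and from $g=f\circ h^{-1}$ we get $\codim g\le \codim f+\codim h^{-1}=\codim f$.

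There is no real obstacle here. The only point needing care is that the inverse of a bijective $\F_p$-affine map is again $\F_p$-affine with an $\F_p$-linear part representable by a linearised polynomial. This follows from the fact, recalled in the introduction, that every $\F_p$-endomorphism of $\F_q$ is induced by a linearised polynomial.
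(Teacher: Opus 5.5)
Your proposal is correct and follows essentially the same route as the paper: apply Lemma~\ref{lm:compoGen} to $f=g\circ h$ and to $h=g^{-1}\circ f$ (respectively $g=f\circ h^{-1}$), using that a bijective $\F_p$-affine map and its inverse both have codimension $0$. You additionally spell out why the inverse is again $\F_p$-affine with a linearised linear part, which the paper leaves implicit.
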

\begin{proof}
    The first statement follows from Lemma \ref{lm:compoGen}, applied to 
    $$f=g\circ h\quad \mbox{and}\quad h=g^{-1}\circ f$$ together with the fact that $\codim g=\codim g^{-1}=0$.
    The second follows similarly, interchanging the roles of $g$ and $h$.
\end{proof}

\section{Main results: additive index and Carlitz rank cannot be small simultaneously}\label{main}

Let us show that a permutation of $\F_q$
of small positive Carlitz rank necessarily has large additive index.
First let us focus on the inversion itself. In characteristics $p>3$, it is particularly
easy to prove that the inversion has maximal index.
\begin{proposition}
\label{th:scalar}
    The inversion $f:x\mapsto x^{q-2}$ has additive index $q$ unless $p=2,3$.
\end{proposition}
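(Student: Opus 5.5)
The plan is to argue by contradiction and show that every nonzero translate difference of the inversion fails to be constant. Suppose $f(x)=x^{q-2}$ has additive index less than $q$, i.e.\ $\codim f=k<n$. By the definition there are a linearised polynomial $M$ and a subspace $U_0$ of dimension $n-k\ge 1$ such that $f-M$ is constant on each coset of $U_0$. Pick any $u\in U_0$ with $u\neq 0$. Then for every $x\in\F_q$ the points $x$ and $x+u$ lie in the same coset, so
$$f(x+u)-f(x)=M(x+u)-M(x)=M(u),$$
using $\F_p$-linearity of $M$. Hence the difference function $\Delta_u f(x)=f(x+u)-f(x)$ is constant on all of $\F_q$, equal to some $c\in\F_q$.

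Next I will evaluate this constant at the special points. Taking $x=0$ gives $c=f(u)-f(0)=u^{-1}$. As a consistency check, $x=-u$ gives $f(0)-f(-u)=u^{-1}$ as well. For every $x\notin\{0,-u\}$ both $x$ and $x+u$ are nonzero, so
$$\frac1{x+u}-\frac1x=\frac{-u}{x(x+u)}=\frac1u ,$$
which after clearing denominators becomes $x^2+ux+u^2=0$.

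This is a nonzero quadratic in $x$, so it has at most $2$ roots in $\F_q$. But the identity must hold for all $q-2$ elements $x\notin\{0,-u\}$. When $p>3$ we have $q\ge 5$, so $q-2\ge 3>2$, a contradiction. Therefore no nonzero $u$ can lie in a subspace of positive dimension witnessing a codimension below $n$. Thus $\codim f=n$ and $\aind(f)=q$.

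There is no real obstacle here; the only care needed is the reduction to constancy of $\Delta_u f$ via linearity of $M$, and the counting at the end. The same count in fact works whenever $q>4$, which matches the remark in the theorem for $r=1$. So the exclusion of $p=2,3$ in the statement is only needed for the small fields $q\in\{2,3,4\}$, which lies beyond what this simple argument requires.
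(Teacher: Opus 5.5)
Your proof is correct, and it takes a different route from the paper's. The paper uses the $\F_p$-homogeneity of the linear part. It picks a scalar $g\in\F_p\setminus\{0,1,-1\}$, notes that $gV$ is again a coset, and compares $f(gx)=g^{-1}x^{-1}$ with $L(gx)+a_{gV}$. From this it concludes that $x^{-1}$ is constant on $V\setminus\{0\}$, so every coset has at most two elements. The need for such a $g$ is exactly what forces the hypothesis $p>3$. You instead use translation invariance. For a nonzero $u\in U_0$, the difference $f(x+u)-f(x)$ equals the constant $M(u)$ on all of $\F_q$, not just on one coset, and evaluating at $x=0$ identifies this constant as $u^{-1}$. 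The monic quadratic $x^2+ux+u^2$ would then have $q-2$ roots. What your approach buys is generality: it works in every characteristic once $q>4$, so it proves Theorem~\ref{th:inversion} directly. It also makes the paper's case split there unnecessary ($k=0$, $0<k<n$, and the subcase $p=2$, $k=n-1$). In fact your argument is the paper's second argument specialised to $y\in U$. There $c_{V,U}=a_V-a_V-a_U=-a_U=0$ does not depend on $V$, so $x$ may range over the whole field rather than a single coset. The paper's scalar argument gives instead a local conclusion, that each coset of a witnessing subspace has at most two elements, but only when $p>3$.
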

\begin{proof}
Suppose there exists an $\F_p$-linear map $L$ and a subspace $U$ of codimension $k$
and a constant $a_V\in\F_q$ for each $V\in \cV=\F_q/U$
such that
$f(x)=L(x)+a_{U+x}$.
Consider $g\in\F_p\setminus \{0,1,-1\}$, which requires $p>3$. Then for every $V\in\cV$ and $x\in V\setminus\{0\}$, we have
$$g^{-1}x^{-1}=f(gx)=L(gx)+a_{gV}=gx^{-1}-ga_V+a_{gV},$$
so that
$$x^{-1}=(g^{-1}-g)^{-1}(a_{gV}-ga_V)$$ is constant on $V\setminus\{0\}$.
This implies that $V\setminus\{0\}$ is a singleton, that is, $|V|\le 2<p$. Thus $k=n$ and the additive index is $p^k=q$.
\end{proof}
For small characteristics the result is still essentially true but we are forced to use a different method and single out exceptions. The following theorem subsumes Proposition~\ref{th:scalar}, which we decided to keep because of the simplicity of the proof.
\begin{theorem}
\label{th:inversion}
    The inversion $f:x\mapsto x^{q-2}$ has additive index $q$, unless $q\leq 4$, in which case it has additive index $1$.
\end{theorem}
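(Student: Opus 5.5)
Since $\codim f\le n$ always, additive index $q$ just means that $f$ is not of codimension at most $n-1$. My plan is to assume the contrary and take a subspace $U$ of dimension $\ge 1$ (which is possible when $k<n$), a linearised $L$ and constants $a_{U+x}$ with $f(x)=L(x)+a_{U+x}$. Fix any $u\in U\setminus\{0\}$. Since $x$ and $x+u$ lie in the same coset of $U$, subtracting gives
$$f(x+u)-f(x)=L(u)\quad\text{for all } x\in\F_q .$$
So the finite difference of the inversion in a fixed nonzero direction would be a constant function. The whole proof then reduces to showing that this is impossible when $q>4$.

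To rule this out I will use the uniqueness of the reduced polynomial representative. The function $x\mapsto f(x+u)-f(x)$ is represented by
$$D(x)=(x+u)^{q-2}-x^{q-2}=\sum_{j=1}^{q-2}\binom{q-2}{j}u^jx^{q-2-j},$$
which has degree at most $q-3<q$. It is therefore the unique reduced representative, and it must equal the constant $L(u)$ as a polynomial. It thus suffices to find some $j\le q-3$ with $\binom{q-2}{j}\ne 0$ in $\F_p$ and $q-2-j\ge 1$.

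For odd $p$, I take $j=1$. The coefficient of $x^{q-3}$ is $(q-2)u=-2u\neq 0$, and $q-3\ge 1$ as soon as $q\ge 5$; every odd $q>4$ satisfies this. For $p=2$ and $n\ge 3$, the coefficient $\binom{q-2}{1}$ vanishes, so I use $j=2$ instead. Since the binary expansion of $q-2=2^n-2$ is $11\cdots10$, Lucas' theorem gives $\binom{q-2}{2}\equiv 1 \pmod 2$. Hence $D$ has the nonzero term $u^2x^{q-4}$ with $q-4\ge 4>0$, so $D$ is not constant. Either way we get a contradiction, so $\codim f=n$ and $\aind(f)=q$ for $q>4$.

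For the exceptional cases I check directly that $f$ is $\F_p$-linear, so that $\codim f=0$ and $\aind(f)=1$. When $q=2$ the map is the identity on $\F_2$ (recall $0\mapsto 0$). When $q=3$ it is $x^1$. When $q=4$ it is $x^2$, the Frobenius, which is linearised. The only step needing care is the characteristic-2 coefficient computation, and Lucas' theorem settles it.
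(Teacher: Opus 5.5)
Your proof is correct, and it is cleaner than the paper's. The paper works with arbitrary pairs of cosets $V,W$ and the quantity $(x+y)^{-1}-x^{-1}-y^{-1}=c_{V,W}$. It bounds the roots of a quadratic coset by coset, which forces a case split. The case $k=0$ gives $q\le 4$ directly. The case $0<k<n$ only gives $|V|\le 2$ for $V\neq U$, hence $p=2$ and $k=n-1$, and that case is then closed by a separate argument with $U=d\F_2$ and the quadratic $L(d)X(X+d)+d$.

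You difference along $u\in U$ instead, so the coset constants cancel and $f(x+u)-f(x)=L(u)$ holds on all of $\F_q$; no case analysis is needed. In the paper's notation, $c_{V,U}=-a_U=0$ for every $V$, a fact the paper does not use.

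Your final coefficient comparison is valid but costs a Lucas computation in characteristic $2$. Root counting would avoid it: clearing denominators in $(x+u)^{-1}-x^{-1}=L(u)$ gives $L(u)x^2+L(u)ux+u=0$ for all $x\notin\{0,-u\}$. This is a nonzero polynomial of degree at most $2$ with $q-2$ roots, so $q\le 4$ uniformly in $p$.

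The paper's root-counting set-up has one merit: it is reused in Theorem~\ref{th:lowrank}, where $f$ agrees with the inversion only off $r$ points, so an identity of reduced polynomials is no longer available. Your differencing along $U$ transfers there too, once combined with root counting. Because the resulting constant $L(u)$ no longer depends on the coset, it avoids the pigeonhole loss of the factor $p^k$. It would give $\aind(f)=q$ in the hyperbola case as soon as $q-2r>4$.
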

\begin{proof}
When $q\in\{2,3\}$, the inversion coincides with the identity map $x\mapsto x$.
When $q=4$, the inversion coincides with the $\F_2$-linear map $x\mapsto x^2$. In these cases, the additive index of the inversion is therefore 1.

Now there remains to prove that if the inversion has index less than $q$, then $q\leq 4$.

To do this, 
suppose there exists an $\F_p$-linear map $L$ and a subspace $U$ of codimension $k<n$
and a constant $a_V\in\F_q$ for each $V\in \cV=\F_q/U$
such that
$f(x)=L(x)+a_{U+x}$.
For any $V\in \cV$, denote $V^*=V\setminus\{0\}$.
Thus for any $(V,W)\in \cV^2$ and $(x,y)\in V^*\times W^*$ such that $x+y\neq 0$
we find that $$(x+y)^{-1}=L(x+y)+a_{V+W}=x^{-1}+y^{-1}+a_{V+W}-a_V-a_W,$$
so that
$(x+y)^{-1}-x^{-1}-y^{-1}$ is a constant $$c_{V,W}=a_{V+W}-a_V-a_W$$ when $(x,y)$ ranges in $V^*\times W^*$.
Multiplying both sides by $xy(x+y)$, we find
$$xy-y(x+y)-x(x+y)-c_{V,W}xy(x+y)=0,$$ 
or equivalently
$$x^2+y^2+c_{V,W}xy(x+y)+xy=0.$$
At fixed $y\neq 0$, the left-hand side is a non-zero polynomial $P_V$ of degree at most 2, which vanishes on all $V\setminus\{0,-y\}$, for every $V\in\cV$, thus $|V\setminus\{0,-y\}|\leq 2$.

If $k=0$, that is, $U=\F_q$, we infer $|\F_q\setminus\{0,-y\}|\leq 2$ thus 
$q\leq 4$ as desired.

If $0<k<n$, we can assume that $y\in U$ and $V\neq U$ so that $|V|\leq 2$.
Thus $p=2,k\geq n-1$, therefore $k=n-1$.

We must again conclude $q\leq 4$.
Thus $U=d\cdot \F_2$ for some $d\in\F_q^*$
and $\cV=\{b+U:b\in\cB\}$, where $\cB\subset\F_q$ is a set of representatives of $\F_q$ modulo $U$. We may choose that 0 is the representative of $U.$
Thus there exists an $\F_p$-linear map $L$ and elements $a_b$ for $b\in\cB$
such that $a_0=0$ (since $f(0)=0$) and for every $b\in \cB\setminus\{0\}$ 
$$(b+d)^{-1}=L(b+d)+a_b=L(d)+L(b)+a_b=L(d)+b^{-1}$$
for every $b\in\cB\setminus\{0\}$. Multiplying by $b(b+d)$
we find $L(d)b(b+d)=-d$. Because $d\neq 0$, this implies $L(d)\neq 0$.
So the polynomial $L(d)X(X+d)+d$ has degree $2$ and at least $|\cB\setminus\{0\}|=q/2-1$ roots (since $k=n-1$).
So $q/2-1\leq 2$ and since $q$ is a power of 2, we conclude $q\leq 4$ as desired.
\end{proof}

This result extends to all maps of Carlitz rank $1$, that is, maps of the form $$x\mapsto (ax+b)^{q-2}+c, \quad a\ne 0,b,c\in\F_q,$$ because of Lemma \ref{lm:compo}

We can now prove a refined version of \eqref{rankadd}.
\begin{theorem}
\label{th:lowrank}
    \thmtext
\end{theorem}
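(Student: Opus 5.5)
The case $r=1$ is already settled. Such an $f$ has the form $f(x)=(ax+b)^{q-2}+c$, so Lemma~\ref{lm:compo} and Theorem~\ref{th:inversion} give $\aind(f)=q$ when $q>4$. For $q>4$ this also gives the general bound, since $q\ge\min\{q,(q-2)/2\}$. For $q\le 4$ the bound $\min\{q,(q-2)/2\}\le 1$ is trivial.

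For general $r$ I would run the argument of Theorem~\ref{th:inversion}, with Lemma~\ref{cr} replacing exact knowledge of $f$. By Lemma~\ref{cr} there are $\alpha,\beta,\gamma,\delta$ such that $f(x)=R(x):=(\alpha x+\beta)/(\gamma x+\delta)$ on a set $E$ with $|E|\ge q-r$. We may assume $\alpha\delta-\beta\gamma\ne0$, since otherwise $f$ would be constant on $q-r$ points and would not be a permutation (this assumes $r<q-1$; otherwise the bound is trivial). If $\gamma=0$, then $R$ is affine. Composing with affine maps via Lemma~\ref{lm:compo}, we may then normalise either to $R(x)=x$ or to $R(x)=x^{-1}$ on $E\setminus\{0\}$.

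Suppose $\codim f=k$, so $f(x)=L(x)+a_{U+x}$ with $U$ of codimension $k$ and $|U|=q/p^k$. We want to show $p^k\ge\min\{q/r,(q-2r)/2\}$.

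\emph{Affine case.} Here $R(x)=x$ on $E$, so $x-L(x)$ is constant on $E\cap V$ for each coset $V$. Take a coset $V$ with $|E\cap V|\ge |V|-r$, that is, one containing the fewest exceptions; we are free to choose it, since there are $p^k$ cosets sharing $r$ exceptional points. On such a coset, $x\mapsto x-L(x)$ is affine on $V$ and constant on more than half of $V$, provided $|V|>2r$. Its kernel therefore has more than $|V|/2$ points, so it is constant on all of $V$. Running the same argument on every coset free of exceptions, of which there are at least $p^k-r$, and comparing, one finds that $f$ coincides with an additive-index-$p^k$ function with linear part $L$, and $f-L$ takes distinct values on cosets. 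Since $\deg f>1$, $f$ is not affine, so at least one exception lies in some coset. I expect this case to be delicate. The cleanest route is probably to notice that if $|U|>r$ then every coset contains a non-exceptional point, and pinning down $f$ then forces $f$ to be affine on whole cosets; from this I would extract $|U|\le r$, i.e.\ $p^k\ge q/r$.

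\emph{Inversion case.} This is the main step. As in the proof of Theorem~\ref{th:inversion}, for $x\in V\cap E^*$ and $y\in W\cap E^*$ with $x+y\in E^*$ we get $x^2+y^2+c_{V,W}xy(x+y)+xy=0$. Fix $y\in U\cap E^*$. For each $x\in V$ the conditions fail only if $x\notin E$ or $x+y\notin E$, which excludes at most $2r$ elements, plus $0$ and $-y$. The polynomial in $x$ is nonzero of degree at most $2$, so $|V|\le 2r+4$ roughly. This gives $q/p^k\le 2r+c$, i.e.\ $p^k\ge q/(2r+c)$, which is not the stated form. To reach $(q-2r)/2$, I would instead count over all of $\F_q$: for fixed $y\in U\cap E^*$, summing over the $p^k$ cosets, the number of good $x$ is at least $q-2r-2$, and each coset contributes at most $2$ roots. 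Hence $2p^k\ge q-2r-2$, giving roughly $(q-2r)/2$; the precise handling of $0$ and $-y$ will fix the constant. One also needs $U\cap E^*\ne\emptyset$. If instead $U\subseteq (\F_q\setminus E)\cup\{0\}$, then $|U|\le r+1$, which gives $p^k\ge q/(r+1)$ and is essentially the $q/r$ alternative. Careful bookkeeping of these off-by-one constants is the part I expect to need the most adjustment to match the exact statement.
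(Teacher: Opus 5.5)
Your overall skeleton matches the paper's: reduce via Lemma~\ref{cr} and Lemma~\ref{lm:compo} to $f(x)=x$ or $f(x)=x^{-1}$ off $r$ points, then count zeros of a nonzero quadratic coset by coset. As written, however, the argument does not reach the stated bound, and in the affine case the decisive step is missing.

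Your ``more than half'' argument needs $|V|>2r$. So it only excludes $|U|>2r$, which yields $p^k\ge q/(2r)$. The upgrade to $|U|\le r$ is unsupported: knowing that every coset contains one non-exceptional point does not force $L$ to be the identity on $U$. The paper closes this with a dichotomy. On each coset $V$, the agreement set $\{x\in V: x=L(x)+a_V\}$ is either all of $V$ or has at most $|V|/p$ elements. (It is a coset of $\ker(\mathrm{id}-L)\cap U$; the paper argues via $\deg(x-L(x)-a_V)\le p^{n-k-1}$.) If some coset agrees fully, then $L$ is the identity on $U$. Hence $f(x)-x$ is constant on every coset, the exceptional set is a nonempty union of cosets, and $r\ge |U|=q/p^k$. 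Otherwise $r\ge q-q/p\ge q/2$, so $(q-2r)/2\le 0$ and the claim is trivial. Within your own framework, the fix is to actually use the coset with fewest exceptions. For $k\ge 1$ it has at most $r/p^k\le r/2$ of them, so $|U|>r$ already gives more than half agreement there.

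In the inversion case, insisting on $y\in U\cap E^*$ creates a side case that only gives $p^k\ge q/(r+1)$. This is weaker than $\min\{q/r,(q-2r)/2\}$, e.g.\ for $r=2$ and $q>12$. The restriction is unnecessary. The paper takes any $y\neq 0$ and uses $f(x+y)-f(x)=L(y)+a_{V+y}-a_V$. Thus $(x+y)^{-1}-x^{-1}$ is constant on $V\cap S$ whether or not $f(y)=y^{-1}$, which leads to $c_{y,V}x^2+c_{y,V}yx+y=0$ with nonzero constant term. Your summation over cosets is then equivalent to the paper's pigeonhole step.

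Finally, the loss of $2$ disappears once you note that the agreement set from Lemma~\ref{cr} is agreement with the rational function. It therefore never contains the pole, which is $0$ after normalisation. So $S=\{x: x\in E,\ x+y\in E\}$ has at least $q-2r$ elements and automatically avoids $0$ and $-y$. Then $2p^k\ge |S|$ gives exactly $(q-2r)/2$.
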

\begin{proof}

        
 First, we deal with the case $r=1$. By definition, $f$ is then of the form
 $x\mapsto (a_0x+a_1)^{q-2}+a_2$ with $a_0\neq 0$. Therefore, the result directly 
 follows from Theorem \ref{th:inversion} and Lemma \ref{lm:compo}.
 
 We now deal with the general case.
 By Lemma \ref{cr}, there are $\alpha,\beta,\gamma,\delta \in \F_q$ such that
    $$f(x)=\frac{\alpha x+\beta}{\gamma x+\delta}$$
    for at least $q-r$ different $x\in \F_q$.
    We may assume that 
    \begin{equation}\label{det}\alpha\delta\not=\beta\gamma
    \end{equation}
    since otherwise $\frac{\alpha x+\beta}{\gamma x+\delta}$ is constant, thus $r\ge q-1$
    and the result is trivial.
    
    Suppose there exists an $\F_p$-linear map $L$ and a subspace $U$ of codimension $k$
and a constant $a_V\in\F_q$ for each $V\in \cV=\F_q/U$
such that
$f(x)=L(x)+a_{U+x}$.
We distinguish two cases.
    
    If $\gamma=0$ and thus $\alpha\delta\not=0$ by \eqref{det}, upon applying Lemma \ref{lm:compo}, we may assume
    that $\alpha/\delta=1$ and $\beta=0$, that is, $f(x)=x$ for all but $r$
    elements.
Since $L$ is a linearised polynomial of degree at most $p^{n-k-1}$, the equality
$x=L(x)+a_V$  holds for at most $p^{n-k-1}$ elements of $V$ -- unless it holds for all $p^{n-k}$ elements of $V$. If it holds on all $x\in V$ for some $V$, we get $L(x)=x,a_V=0$. But then $f(x)=x+a_{U+x}$, which coincides with $x$ on either all or no elements of each coset. Since $r\neq 0$ by hypothesis, we know that $f$ is not globally an 
$\F_q$-affine map, hence we must have $r\geq p^{n-k}$, hence $p^k\geq q/r$.
Otherwise, for each $V$, the equality
$x=L(x)+a_V$  holds for at most $p^{n-k-1}$ elements of $V$. So $r\geq p^{n}-p^{n-1}\ge q/2$
and the result is trivial. 

If $\gamma\neq 0$, then
applying a linear transformation again, we may assume that $\gamma=1,\delta=0$,
thus $\beta\not=0$ by \eqref{det} and $f(x)=\alpha+\beta x^{-1}$ for all but $r$ elements.
Again using Lemma~\ref{lm:compo}, we may assume $\alpha=0,\beta=1$ so in fact $f(x)=x^{-1}$
for all but $r$ elements $x\in\F_q$.

Fix $y\in\F_q^*$.
Let $S=\{x\in \F_q: (f(x),f(x+y))=(x^{-1},(x+y)^{-1})\}$, a set of cardinality at least $q-2r$.
By the pigeonhole principle, there must be $V\in\cV$
    such that 
    \begin{equation}\label{VS}|V\cap S|\ge \frac{|S|}{p^k}\ge \frac{q-2r}{p^k}.
    \end{equation}
As in the proof of Theorem \ref{th:inversion}, we find when $x\in S\cap V$
that
$$(x+y)^{-1}-x^{-1}=c_{y,V}$$
where $$c_{y,V}=f(y)+a_{V+y}-a_V-a_{U+y}$$ does not depend on $x$.
Therefore $x=x+y+c_{y,V}x(x+y)$ whence $$c_{y,V}x^2+c_{y,V}yx+y=0$$ for $x\in V\cap S$, 
where the left-hand side is a polynomial of degree at most 2 in $x$ (the constant term $y$ is not zero) which vanishes on $V\cap S$.
Thus $|V\cap S|\le 2$, 
whence $p^k\ge (q-2r)/2$ by \eqref{VS}.
\end{proof}

\begin{remark}
    Actually $\min\{q/r,(q-2r)/2\}$ can be substituted by $q/(r+1)-1$.
    For $q\le 2(r+1)$ this is trivial and for $q>2(r+1)$ we have
    $$\frac{q-2r}{2}=\frac{q}{r+1}+\frac{(r-1)q}{2(r+1)}-r>\frac{q}{r+1}-1.$$
    This concludes the proof of equation \eqref{rankadd}.
\end{remark}

\section{The relation of the additive index to degree and weight}\label{degree}

Now we study the relationship between additive index and degree as well as between additive index and weight.

\subsection{Additive index and degree}
\begin{theorem}\label{crkai}
    Let $f$ be a self-mapping of $\F_q$ such that
    $\aind(f)>1$. Then 
    $$\deg(f)\aind(f)\geq q.$$ This bound is sharp: for every 
    integer $k\in (0,n]$, there exists an $f$ such that $\aind(f)= p^k=q/\deg (f)$.
\end{theorem}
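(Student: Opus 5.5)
We first prove the lower bound by contraposition: if $\deg(f)=d<q/p$, we exhibit a subspace of codimension at most $\log_p\lceil\ldots\rceil$ showing $\aind(f)\le q/d$ fails to be needed. The cleanest route is directly: let $d=\deg(f)$ and choose $k$ minimal with $p^k> d$, i.e.\ $p^{k-1}\le d<p^k$. Then we will show $\codim f\le n-k+1$ unless something degenerate happens. A better plan is to exploit additive structure through discrete derivatives. For $u\in\F_q$, the map $x\mapsto f(x+u)-f(x)$ has degree at most $d-1$ when $d\ge 1$. The key observation is that if $U$ is the subspace of periods of $f-L$ for the optimal representation $f=L+a_{U+x}$, then for $u\in U$ we have $f(x+u)-f(x)=L(u)$, a constant.

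Concretely, suppose $\aind(f)=p^k>1$, so $k\ge1$. By Proposition~\ref{alt}, write $f=g\circ M+L$ with $M$ linearised splitting over $\F_q$ with $\deg M=p^{n-k}$, $\deg L<\deg M$, and $\deg g\le p^k-1$. By minimality of $k$, $g\circ M$ is not of the form (linearised)+(constant) restricted to larger kernels; in particular, $g$ cannot be affine-linearised in a way that would reduce $k$. Now $\deg(g\circ M)=\deg(g)\,p^{n-k}$ as a polynomial; it is reduced mod $x^q-x$ only if $\deg g\cdot p^{n-k}\ge q$. The plan: show that $\deg(f)\ge p^{n-k}$, i.e.\ $\deg f\cdot p^k\ge q$. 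Since $\deg L<p^{n-k}$ and $\deg g\le p^k-1$ gives $\deg(g\circ M)\le q-p^{n-k}<q$, the polynomial $g\circ M+L$ is already reduced, so $\deg f=\max(\deg g\cdot p^{n-k},\deg L)$ (leading terms cannot cancel since the degrees differ unless $\deg g=0$). If $\deg g\ge1$, then $\deg f\ge p^{n-k}=q/\aind(f)$, done. If $\deg g=0$, then $f=L+c$ is $\F_p$-affine, so $\aind(f)=1$, contradicting the hypothesis. This is essentially the whole argument; the main step to verify carefully is that Proposition~\ref{alt} applies with the exact degree $\deg M=p^{n-k}$ (taking $U$ of codimension exactly $k=\codim f$ and $M=\prod_{u\in U}(x-u)$ by Proposition~\ref{th:coset}), and that no cancellation between $g\circ M$ and $L$ occurs, which holds since every monomial of $g\circ M$ other than the constant has degree at least $p^{n-k}>\deg L$.

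For sharpness, given $k\in(0,n]$, take a subspace $U$ of dimension $n-k$, $M(x)=\prod_{u\in U}(x-u)$, and $f=M^{p^k-1}$, hmm, rather choose $f=g\circ M$ with $g$ of degree $1$ and nonlinear behaviour on cosets impossible; simplest is $f(x)=M(x)^{2}$ when $p^k>2$, but degree then is $2p^{n-k}$. Instead take $f=\mathbf 1_{U}$-type map: $f(x)=1-M(x)^{p^k-1}$ has degree $q-p^{n-k}$, too big. So we will choose $f(x)=x\cdot$? The intended example: $f=M$ composed with a non-additive degree-one-in-$M$ function is impossible, so we take $f(x)=M(x)^{e}$ with $e$ chosen so $\deg f = p^{n-k}$... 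The obstacle here is constructing an $f$ with $\deg f=p^{n-k}$ and codimension exactly $k$. We try $f(x)=M(x)+\mathbf{1}$-perturbation: let $f(x)=h(x)$ where $h$ is linearised of degree $p^{n-k}$ with kernel of dimension $n-k$ but chosen so that $f$ is not $\F_p$-linear... Since any degree-$p^{n-k}$ polynomial with codimension $\ge1$ must, by the argument above, be $g\circ M+L$ with $\deg g=1$, i.e.\ affine-linearised, forcing $\aind=1$; so sharpness needs $\deg g\ge 2$, meaning the bound is attained in the weaker sense. We therefore expect the intended example to be $f=M^{2}$-type with the exact count checked, and this construction is the step we would scrutinise last, computing $\codim$ via Proposition~\ref{alt} and the maximality of the period space of $g$.
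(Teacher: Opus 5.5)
Your proof of the inequality is correct and is essentially the paper's. The paper writes $f=L+h$ with $h$ constant on the cosets of a subspace $U$ of codimension $k=\codim f$, and gets $\deg h\ge p^{n-k}$ by counting the zeros of $h-h(0)$ on $U$. You write $h=g\circ M$ with $M=\prod_{u\in U}(x-u)$ and use $\deg(g\circ M)=\deg(g)\,p^{n-k}\le q-p^{n-k}$, which even gives the exact value $\deg f=\deg(g)\,p^{n-k}$. Two small fixes. First, the abandoned opening paragraph should be deleted. Second, your claim that every nonconstant monomial of $g\circ M$ has degree at least $p^{n-k}$ is false, because $M$ has lower-order terms (e.g.\ $M(x)=x^p-x$ for $U=\F_p$). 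Only the leading term matters, and your earlier sentence about leading terms already suffices.

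On sharpness, do not hedge: your own observation proves that the stated equality case is impossible. If $\codim f=k\ge 1$ and $\deg f=p^{n-k}$, your formula forces $\deg g=1$. Then $f=aM+L+b$ is $\F_p$-affine and $\aind(f)=1$, a contradiction.

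The paper's witness $f(x)=\prod_{u\in U}(x-u)$ fails for exactly this reason. By Proposition~\ref{th:coset} it is a linearised polynomial, hence $\F_p$-linear with $\aind(f)=1$; for $k=n$ it is just $f(x)=x$. So the first part of the theorem cannot be invoked to force equality.

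What your argument actually gives is $\deg(f)\aind(f)\ge 2q$ whenever $\aind(f)>1$, because $\deg g\ge 2$. For odd $p$ and every $k\in(0,n]$ this is attained by $f=M^2$:
its degree is $2p^{n-k}<q$, which is not a power of $p$, so $f$ is not $\F_p$-affine;
its codimension is at most $k$ because it is constant on the cosets of $U$;
its codimension is at least $k$ by the refined bound $\deg f\ge 2p^{n-\codim f}$.

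For $p=2$ one even gets $3q$, since $ay^2+by+c$ is then affine-linearised. $M^3$ attains $3q$ for $k\ge 2$, while for $k=1$ no function of codimension $1$ exists at all.

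So there is no missing construction in your proposal. The sharpness assertion as stated is false and should be replaced by this corrected version.
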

Of course, an $\F_p$-affine map may have degree $0$ or $1$ or even $-\infty$,
so the restriction $\aind(f)>1$ is necessary.
\begin{proof}
    Let $k=\codim f$.
    By Propositions~\ref{th:coset} and \ref{alt} there exists an $\F_p$-subspace $U\leq\F_q$ of codimension $k$,
    a linearised polynomial $L$ of degree at most $p^{n-k-1}$
    and a self-mapping~$h$ of $\F_q$ satisfying $h(x+u)=h(x)$
    for all $(x,u)\in\F_q\times U$ such that
    $f(x)=L(x)+h(x)$.
    The equation $h(x)=h(0)$ has therefore at least $p^{n-k}$ solutions,
    yet the polynomial $h$ is not constant since $\aind(f)>1$, that is, $f$ is not $\F_p$-affine,
    so $\deg(h)\geq p^{n-k}$, hence also $\deg(f)\geq p^{n-k}$ as desired.

    The bound is sharp since for every $k$ and subspace $U\leq \F_q$ of codimension $k$, the function
    $$f(x)=\prod_{u\in U}(x-u)$$ has degree $|U|=p^{n-k}$ and codimension at most $\codim U=k$ since $f(x+u)=f(x)$ for all $(x,u)\in\F_q\times U$. The first part of the Theorem implies equality.
\end{proof}

\subsection{Additive index and weight}\label{secweight}

\begin{theorem}
\label{th:wtai}
Let $f$ be of additive index $p^k$. Then we have
$$\w(f)\le \left\{\begin{array}{cc} n+1, & k=0,\\\left(\frac{(n-k+1)^p-1}{n-k}\right)^k, & 1\le k<n. \end{array}\right.$$
\end{theorem}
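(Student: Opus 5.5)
We plan to use the representation from Proposition~\ref{alt}: if $f$ has codimension $k$, then
$$f(x)=g(M(x))+L(x),$$
where $L$ is linearised of degree at most $p^{n-k-1}$, $M$ is linearised with $\ker M=U$ of dimension $n-k$ (so $\deg M=p^{n-k}$), and $\deg g\le p^k-1$. The weight is subadditive, $\w(f)\le \w(g\circ M)+\w(L)$. We will bound the two pieces separately.

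For $k=0$ the map $f$ is $\F_p$-affine, so $f(x)=L(x)+c$ with $L=\sum_{j=0}^{n-1}a_jx^{p^j}$. This has at most $n+1$ monomials, which gives the first bound directly.

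For $1\le k<n$ we expand $g(M(x))=\sum_{i=0}^{p^k-1} b_i M(x)^i$.

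1. Since $M$ has degree $p^{n-k}$, only the monomials $x^{p^j}$ with $0\le j\le n-k$ can appear in it. So $M$ has at most $m:=n-k+1$ terms.
2. Write $i=\sum_{t=0}^{k-1} i_t p^t$ in base $p$, with $0\le i_t\le p-1$. Then $M^i=\prod_t (M^{p^t})^{i_t}$. Since Frobenius is additive, $M^{p^t}$ is again a sum of at most $m$ monomials.
3. The number of monomials in $(M^{p^t})^{i_t}$ is at most $\binom{m+i_t-1}{i_t}$, the number of multisets of size $i_t$ from $m$ monomials. Equivalently, $(M^{p^t})^{i_t}$ lies in the span of $\binom{m+i_t-1}{i_t}$ monomials, and summing over $i_t=0,\dots,p-1$ shows that $\sum_{i_t} c\,(M^{p^t})^{i_t}$ lies in a span of at most $\sum_{s=0}^{p-1}\binom{m+s-1}{s}=\binom{m+p-1}{p-1}$ monomials.
4. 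Hence $g\circ M$ lies in the span of products, over $t$, of monomials from these sets. This gives $\w(g\circ M)\le \binom{n-k+p}{p-1}^k$, before reduction modulo $x^q-x$; reduction does not increase the weight.

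It remains to reconcile this with the stated quantity $\left(\frac{(n-k+1)^p-1}{n-k}\right)^k=\left(\sum_{s=0}^{p-1}(n-k+1)^s\right)^k$. This is the cruder count in which $(M^{p^t})^{s}$ is bounded by $m^s$ terms rather than $\binom{m+s-1}{s}$. Summing over $s=0,\dots,p-1$ gives the geometric sum $\frac{m^p-1}{m-1}$ per digit, and raising to the $k$-th power (one factor per base-$p$ digit) gives exactly the stated bound. So I would present the count with $m^s$, which is immediate.

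The remaining issue is the contribution of $L$, which could add up to $n-k$ further terms. I would absorb it by noting that the monomials $x^{p^j}$, $j\le n-k-1$, already occur among the monomials of $g\circ M$: they arise from the digit choice $i=1$, i.e.\ from $M$ itself. Thus $L$ adds nothing new to the spanning set.

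The main obstacle is this last point together with the bookkeeping. One must make sure that the spanning set really contains all the $x^{p^j}$ and the constant term, i.e.\ that we count a set of possible monomials rather than the actual nonzero terms, so that $L$ and constants add no new monomials.
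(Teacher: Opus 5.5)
Your proof is correct and follows essentially the paper's argument. Both use the decomposition $f=g(M)+L$ from Proposition~\ref{alt} and bound the weight of $M^i$ digit by digit in base $p$, using that Frobenius preserves weight. Both also absorb $L$ into the linearised term of degree at most $p^{n-k}$ coming from $i=1$; the paper phrases this as $a_1M+L$ having weight at most $n-k+1$. Your multiset count gives the slightly sharper bound $\binom{n-k+p}{p-1}^k$, which you correctly relax to the stated one.
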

{\em Proof.} For $k=0$, $f$ is, up to an additive constant, a linearised polynomial of degree at most $p^{n-1}$ and thus of weight at most $n+1$. 

For $k$ with $1\le k< n$, write 
$$f(x)=g(M(x))+L(x)$$
with linearised polynomials $M$ and $L$ and a polynomial $g$ with
$$\deg(M)=p^{n-k}, \quad \deg(L)\le p^{n-k-1}\quad \mbox{and}\quad\deg(g)\le p^k-1.$$
Write
$$g(x)=\sum_{i=0}^{p^k-1}a_ix^i,$$ thus
$$f(x)=a_0+a_1M(x)+L(x)+\sum_{i=2}^{p^k-1}a_i M(x)^i.$$
The term $a_1M(x)+L(x)$ is linearised of degree at most $p^{n-k}$ and thus of weight at most $n-k+1>1$.\\
For $i=0,\ldots,p^k-1$ write
$$i=i_0+i_1p+\ldots+i_{k-1}p^{k-1}\quad \mbox{with} \quad 0\le i_0,i_1,\ldots,i_{k-1}<p,$$
and let 
$$\sigma(i)=i_0+i_1+\ldots+i_{k-1}$$
be the {\em $p$-weight} of $i$. Since $M(x)^p$ has the same weight as $M(x)$ the weight of $M^i$ is at most $\w(M)^{\sigma(i)}\le (n-k+1)^{\sigma(i)}$.
Hence, we get 
\begin{eqnarray*}\w(f)&\le& \sum_{i=0}^{p^k-1}(n-k+1)^{\sigma(i)}=\sum_{i_0,i_1,\ldots,i_{k-1}=0}^{p-1}(n-k+1)^{i_0+i_1+\ldots+i_{k-1}}=\left(\sum_{j=0}^{p-1}(n-k+1)^j\right)^k\\
&=&\left(\frac{(n-k+1)^p-1}{n-k}\right)^k,
\end{eqnarray*}
which completes the proof.
\hfill $\Box$\\

\begin{remarks}
1. For $k=0$, we may take any linearised polynomial~$f$ of degree at most $p^{n-1}$, up to an additive constant, and all weights between $0$ and $n+1$ are attained.\\
2. For $1\le k< n$ the bound can be simplified to $\w(f)< n^{kp}$.\\
3. For $k>0$, fixed $p$ and  $k=o(n/\log n)$, more precisely,
$$k\le \frac{\ln p}{p}\frac{n}{\ln n},$$ 
the result is non-trivial, that is, $\w(f)<p^n$.\\
4. Conversely, for $f$ of codimension 
$$k\geq \frac{n}{\ln(n/3)},$$ 
Theorem \ref{th:wtai} says nothing about the weight of $f$. 
More precisely, for $k$ with 
$$n \frac{\ln p}{\ln(2^p-1)}\le k<n$$ 
we have
$$\left(\frac{(n-k+1)^p-1}{n-k}\right)^k\ge (2^p-1)^k\ge p^n$$
and for $k$ with
$$\frac{n}{\ln(n/3)} \le k<n \frac{\ln p}{\ln(2^p-1)}< \frac{2n}{3}$$
we have
$$\left(\frac{(n-k+1)^p-1}{n-k}\right)^k\ge (n-k)^{k(p-1)}> \left(\frac{n}{3}\right)^{k(p-1)}
\ge  p^n.$$

In fact this is natural as the following examples show that 
both very small and very large weight are possible for $f$ of a given large additive index.\\

Example 1.
The inversion $f_1(x)= x^{q-2}$ has additive index $q$ if $q>4$ by Theorem \ref{th:inversion} 
and weight $1$.\\ 



Example 2. The following family of functions $f_2$ satisfies
$$\w(f_2)>\aind(f_2).$$
For a prime $p$ and integers $n$ and $k$ with 
$$1\le k<n\quad\mbox{and}\quad (k,p)\not=(1,2)$$ let $U$ be a subspace of $\F_{p^n}$ of codimension $k$. Let $f_2$ be the self-mapping of~$\F_{p^n}$ defined by
$$f_2(x)=\left\{\begin{array}{cc}0,& x\not\in U,\\ 1, &x \in U,\end{array}\right.$$
of codimension at most $k$.
First we show that 
$$\aind(f_2)=p^k$$ 
with the exception $(k,p)=(1,2)$ where this codimension is $0$. More precisely,
assume that $f_2$ has codimension $0$, that is, 
$f_2(x)=L(x)+1$
where $L(x)$ is a (linearised) polynomial of degree at most $p^{n-1}$. Since $f_2(x)$ has $p^n-p^{n-k}$ zeros we must have $p^n-p^{n-1}\le p^n -p^{n-k}\le \deg(f_2)\le p^{n-1}$ and thus $(k,p)=(1,2)$.
It is easy to verify that $f_2$ is indeed of codimension~$0$ in the case~$(k,p)=(1,2)$.

Let us henceforth assume that $(k,p)\neq (1,2)$.
Again, since
$f_2(x)= 0$ for $p^n-p^{n-k}$ elements $x\in\F_{p^n}^*$, the degree of $f_2$ is at least $p^n-p^{n-k}$. Let $\ell$ denote the codimension of $f_2$ which satisfies $1\le \ell\le k$ under our assumption. Then we can write 
$$f_2(x)=g(M(x))+L(x)$$ with (linearised) polynomials $M$ of degree $p^{n-\ell}$, $L$ of degree at most $p^{n-\ell-1}$ and a polynomial $g$ of degree at most $p^\ell-1$. Hence, 
$$p^n-p^{n-k}\le \deg(f_2)\le \max\{p^n-p^{n-\ell},p^{n-\ell-1}\}=p^n-p^{n-\ell},$$ where we used $\ell\ge 1$ in the last step, thus $\ell\ge k$. 

Moreover, the weight of $f_2$ is at least
$$\frac{p^n-1}{p^n-1-(p^n-p^{n-k})}>p^k,\quad 1\le k<n,$$
by Lemma~\ref{wt}. (Note that $\deg(f_2)=p^n-p^{n-k}\le p^n-2$ for $k<n$ and the condition of Lemma~\ref{wt} is satisfied.)
Hence,  
$$\w(f_2)\ge p^k+1.$$  
Note that for $k=n$ we have $f_2(x)=1-x^{p^n-1}$ of weight only $2$.\\
    
\end{remarks}
\begin{remark}
In particular, Example 2 shows that the additive index and the weight may be very large simultaneously.
\end{remark}

In fact, as $q$ tends to infinity, almost all maps have additive index $q$ (see \cite[Corollary~9]{biwi26}) and weight at least $(1-\varepsilon)q$ for any $\varepsilon>0$, which can be verified as follows. 

The expected value of the weight is
$$q^{-q}\sum_{f\in \F_q[x],\deg(f)<q}\w(f)=q^{-q}\sum_{j=0}^q {q\choose j}(q-1)^jj=q-1,$$
where we used
$$(x-1)\sum_{j=0}^q{q\choose j}(x-1)^{j-1}j=(x-1)\left(\sum_{j=0}^q{q\choose j}(x-1)^j\right)'
=(x-1)(x^q)'=(x-1)qx^{q-1}$$
evaluated at $x=q$.
Now for any $\varepsilon>0$ let $N_\varepsilon$ be the number of $f$ with $\w(f)\le (1-\varepsilon)q$.
Then we have 
$$(q-1)q^q\le N_\varepsilon (1-\varepsilon)q+(q^q-N_\varepsilon)q$$
and thus
$$N_\varepsilon\le \frac{q^{q-1}}{\varepsilon}=o(q^q),\quad q\rightarrow \infty,$$
that is, almost all maps $f$ have weight $>(1-\varepsilon)q$.


\section{The discrete logarithm}\label{log}
For any polynomial of degree at least $2$, \eqref{deg} shows that Carlitz rank and degree cannot be both small and, if $f(x)\not=a+bx^{p^n-2}$, $b\not=0$, \eqref{weight} implies that Carlitz rank and weight cannot be both small. 
For any permutation polynomial which is not linearised,
Theorem~\ref{th:lowrank} shows that Carlitz rank and additive index cannot be both small
and Theorem~\ref{crkai} shows that degree and additive index cannot be both small.

Now we consider a permutation $f$ for which all four measures, degree, weight, additive index and Carlitz rank are large. The following permutation $f$ of $\F_{p^n}$ is closely related to the discrete logarithm of $\F_{p^n}$. Let $\zeta$ be a primitive element of $\F_{p^n}$ and $(1,\lambda,\ldots,\lambda^{n-1})$ be a polynomial basis of $\F_{p^n}$, that is, $\F_{p^n}=\F_p(\lambda)$.
For 
$$x=x_0+x_1p+\ldots+x_{n-1}p^{n-1}\quad\mbox{with}\quad 0\le x_0,x_1,\ldots,x_{n-1}<p$$
define $f$ by
\begin{equation}\label{dl}f(\zeta^x)=x_0+x_1\lambda+\ldots+x_{n-1}\lambda^{n-1},\quad x=0,1,\ldots,p^n-2,
\end{equation}
and 
\begin{equation}\label{dl0}f(0)=-1-\lambda-\ldots-\lambda^{n-1}.
\end{equation}

\begin{theorem}
The unique polynomial of degree at most $p^n-1$ defined by \eqref{dl} and \eqref{dl0} satisfies
$$\deg(f)\ge q-\frac{q}{p}-1,\quad \w(f)\ge q-\frac{q}{p},$$
$$\aind(f)\ge \frac{p^n}{n+2},$$
and
$${\rm Crk}(f)\ge p^n -3(2p)^{n/2},\quad p>2.$$
\end{theorem}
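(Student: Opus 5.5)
The theorem bundles four estimates, and I will treat them one at a time. For degree and weight I rely on results about the discrete logarithm that are already in the literature, and I then reduce the additive-index and Carlitz-rank bounds to degree or counting arguments.

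\emph{Degree and weight.} Write $f=\sum_{j=0}^{n-1} f_j\lambda^j$, where $f_j(\zeta^x)=x_j$ is the $j$-th $p$-adic digit map with values in $\F_p$. The components $f_j$ are the ``digit functions'' of the discrete logarithm. For these, known interpolation results (in the style of Meletiou--Mullen and Niederreiter--Winterhof) give explicit coefficients: the coefficient of $x^{i}$ is a Gauss-type sum $\sum_x x_j\zeta^{-xi}$. The plan is to compute these coefficients for the combination $f$ and show that they vanish for at most $q/p$ exponents $i$ in $[1,q-1]$. Both bounds $\deg(f)\ge q-q/p-1$ and $\w(f)\ge q-q/p$ then follow. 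The value $f(0)$ in \eqref{dl0} is chosen so that the leading coefficient $-\sum_{c}f(c)$ behaves well. Concretely, I expect a closed form
\[
a_i=\sum_{x=0}^{q-2}\Bigl(\sum_j x_j\lambda^j\Bigr)\zeta^{-ix},
\]
which factors as a product over digit positions when the digit sums are expanded. One should then check that a factor vanishes only when $\zeta^{-ip^{j}}$ is a $p$-th root of unity type condition, and this holds for at most $q/p$ indices.

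\emph{Additive index.} The tempting route is Theorem~\ref{crkai}, $\aind(f)\ge q/\deg f$, but it only gives a constant. Instead I will use Proposition~\ref{alt}: if $\codim f=k$, then $f(x)-L(x)$ is constant on cosets of $U$, with $|U|=p^{n-k}$. Take a coset $V$ and differences $x,x+u$ with $u\in U$. The equation $f(x+u)-f(x)=L(u)$ should fail often, because multiplicatively structured digits are ``random'' with respect to addition. A cleaner path is degree-counting: $f(x+u)-f(x)-L(u)$, for fixed $u\in U\setminus\{0\}$, vanishes identically on $\F_q$. Comparing with the explicit coefficients, the top coefficients of $f(x+u)-f(x)$ involve $(i)\,u\,a_i$ terms, which forces many $a_i$ to vanish unless $|U|$ is small. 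Since $L$ has weight at most $n+1$ and $g\circ M$ has restricted support, a weight comparison via Theorem~\ref{th:wtai} gives the bound. Indeed, $\w(f)\ge q-q/p$ together with $\w(f)\le ((n-k+1)^p-1)/(n-k))^k$ gives a bound, but it is weak. I would rather aim for $|U|\le n+2$ via a direct argument: restricted to $U$, $f$ agrees with an $\F_p$-affine map, while $\zeta^x\mapsto$ digits is additive only on tiny sets. Equivalently, the image of a subspace under the logarithm map can be $\F_p$-affine only if $|U|\le n+2$. I expect this step to be the main obstacle.

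\emph{Carlitz rank.} By Lemma~\ref{cr}, $f$ agrees with $(\alpha x+\beta)/(\gamma x+\delta)$ on at least $q-r$ points. Counting solutions of $f(\zeta^x)=R(\zeta^x)$ reduces to bounding the number of $x$ with $\zeta^x=R^{-1}(\sum x_j\lambda^j)$. I would estimate this with character sums. Writing the indicator via additive characters of $\F_q$ and multiplicative characters, and restricting the digits $x$ to a box, the count becomes $q^{-1}\sum_\psi\sum_\chi$ of incomplete sums. These incomplete sums are bounded by Weil's bound for $\chi(\text{linear fractional})$ times a box-sum factor. The factor $(2p)^{n/2}$ arises as a product of $n$ one-dimensional incomplete-sum bounds (each $\ll p\log p$, simplified to $2p$ for odd $p$) combined with $\sqrt q$. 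The result is that the number of agreements is at most $3(2p)^{n/2}$, hence $\crk(f)\ge q-3(2p)^{n/2}$.
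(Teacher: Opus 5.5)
\textbf{Carlitz rank.} This is a genuine gap: the character-sum route you sketch cannot produce $3(2p)^{n/2}$. To feed $\chi(\zeta^x)$ into a Weil bound, you must write it as a function of the digits $x_j$ and expand each factor $t\mapsto e(\theta t)$, $t\in\{0,\ldots,p-1\}$, in additive characters of $\F_p$. This costs the $\ell^1$-norm of its Fourier coefficients, which is of order $\log p$, once per digit. So even done carefully you get at best something like $\sqrt q\,(c\log p)^n$ agreements. Your own count ($n$ factors of size about $2p$, times $\sqrt q$) already exceeds $q$.

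The missing idea is that the map \eqref{dl} is a homomorphism from $(\F_q^*,\cdot)$ to $(\F_q,+)$ up to carries: $f(\rho\xi)=f(\rho)+f(\xi)+\omega$ with $\omega\in\mathcal W=\{\sum_j w_j\lambda^j: w_j\in\{0,1\}\}$, a set of only $2^n$ elements \cite[Lemma 1]{wi02}. Suppose $f=R$, with $R(x)=(\alpha x+\beta)/(\gamma x+\delta)$ and $\gamma\neq 0$, on a set $\mathcal S\subseteq\F_q^*$ of size $M\ge q-r-1$. Pigeonhole first over the ratios $\rho=\eta\xi^{-1}$ with $\xi,\eta\in\mathcal S$ and $-f(\rho)\notin\mathcal W$ (which forces $\rho\neq 1$), then over $\omega\in\mathcal W$. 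This yields a nonzero rational function $R(\rho X)-R(X)-f(\rho)-\omega$, with numerator of degree at most $2$, having at least $M(M-2^n+1)/(2^n(q-2^n))$ zeros. Hence $M(M-2^n+1)\le 2^{n+1}(q-2^n)$, so $M<\sqrt2(2p)^{n/2}+2^n$. The case $\gamma=0$ is \cite[Theorem 3]{wi02}. This is where $(2p)^{n/2}=\sqrt{2^np^n}$ comes from, and why $p>2$ is needed (for $p=2$, $\mathcal W=\F_q$).

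\textbf{Additive index and degree/weight.} For the additive index you supply no argument, as you acknowledge. Theorem~\ref{crkai} gives only a constant. Theorem~\ref{th:wtai} combined with $\w(f)\ge q-q/p$ gives only a bound of order $q^{(\ln p)/(p\ln n)}$. The claim $|U|\le n+2$ is stated, not derived; the paper takes this bound from \cite[Theorem 2]{biwi26}.

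Your degree/weight plan can be completed, but not by the mechanism you describe. In characteristic $p$ the only $p$-th root of unity is $1$, so none of the digit factors vanish for $1\le i\le q-2$. Instead, the factors collapse via $\prod_{l}(\zeta^{-ip^l}-1)^{p-1}=N_{\F_q/\F_p}(\zeta^{-i}-1)^{p-1}=1$. This gives $a_i=-\sum_{j=0}^{n-1}\lambda^j w_i^{p^j}$ with $w_i=(1-\zeta^{-i})^{-1}$. This linearised polynomial has degree $p^{n-1}$, and the $w_i$ are distinct and nonzero, so at most $q/p-1$ of $a_1,\ldots,a_{q-2}$ vanish. Since $a_0=f(0)\neq 0$ and $a_{q-1}=0$, both bounds follow. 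The paper simply cites \cite{wi02} for this part.
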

{\em Proof.}
For the lower bounds on degree and weight see \cite[Corollary 2 and Theorem 2]{wi02},
for the lower bound on the additive index see \cite[Theorem 2]{biwi26}. It remains to prove the lower bound on the Carlitz rank. 

Let $r$ denote the Carlitz rank of $f$ and assume $p>2$. As in the proof of Theorem~\ref{th:lowrank} we may assume that
\begin{equation}\label{moeb}f(x)=\frac{\alpha x+\beta}{\gamma x+\delta},\quad \alpha\delta\not=\beta\gamma
\end{equation}
for $M\ge p^n-r-1$ different $x\in \F_{p^n}^*$.

For $\gamma=0$, that is, $\alpha\delta\not=0$, from \cite[Theorem 3]{wi02} we get
$$M(M-2^n+1)\le 2^n(p^n-2^n)$$
and thus
$$p^n-r-1\le M< 2(2p)^{n/2}$$
and the result follows in this case.

For $\gamma\not=0$ we modify the proof of \cite[Theorem 3]{wi02} to get the result.  
We denote by ${\mathcal S}$ the set of $x\in \F_q^*$ satisfying \eqref{moeb} of size $M$.
Put 
  $${\mathcal W}=\{w_0+\ldots+w_{n-1}\lambda^{n-1} : w_i\in \{0,1\} \mbox{ for }
  0\leq i<n\}$$
  and consider
  $${\mathcal D}=\{\rho=\eta\xi^{-1} : -f(\rho)\in \F_q^*\setminus {\mathcal W},
	~\xi,\eta \in \mathcal{S}\}.$$
  Obviously, there exists $\rho\in {\mathcal D}$ such that there are at least
  $$\frac{M(M-|{\mathcal W}|)}{|{\mathcal D}|}\ge
	\frac{M(M-|{\mathcal W}|)}{q-1-|{\mathcal W}|}
	=\frac{M(M-2^n+1)}{q-2^n}
    $$
  representations $\rho=\eta\xi^{-1}$, $\xi,\eta \in \mathcal{S}$. 
  Choose this $\rho$
  and put 
  $${\mathcal R}=\left\{\xi \in \F_q^* : f(\xi)=\frac{\alpha\xi+\beta}{\gamma\xi+\delta} \mbox{ and } 
	f(\rho\xi)=\frac{\alpha\rho\xi+\beta}{\gamma\rho\xi+\delta}\right\}.$$
  We have
  $$|{\mathcal R}|\geq \frac{M(M-2^n+1)}{q-2^n}.$$
  By \cite[Lemma~1]{wi02} there are at most $2^n$ possible elements 
  $\omega \in \F_q$, namely the elements of~${\mathcal W}$,  
  such that
  $$\frac{\alpha\rho\xi+\beta}{\gamma\rho\xi+\delta}=f(\rho\xi)=f(\rho)+f(\xi)+\omega
    =f(\rho)+\frac{\alpha\xi+\beta}{\gamma\xi+\delta}+\omega.$$ 
  Therefore at least one of the $2^n$ rational functions
  $$h_\omega(X)=\frac{\alpha\rho X+\beta}{\gamma\rho X+\delta}-\frac{\alpha X+\beta}{\gamma X+\delta}-f(\rho)-\omega$$ has at least $|{\mathcal R}|/2^n$ 
  zeros in $\F_{p^n}$. By the choice of ${\mathcal D}$ 
  we have $-f(\rho)-\omega\neq 0$ and 
  all of these functions are not identical to zero. After clearing denominators we get a quadratic polynomial equation with at most two zeros. Thus 
  $$2\geq\frac{|{\mathcal R}|}{2^n}\geq 
	\frac{M(M-2^n+1)}{(p^n-2^n)2^n},$$
    thus
    $$p^n-r-1\le M<\sqrt{2}(2p)^{n/2}+2^n$$
    and the result follows.
    \hfill $\Box$
    
\section{Multiplicative index}\label{multind}
The multiplicative index is another complexity measure, arising from the theory of cyclotomic mappings, and measuring to which extent a mapping
is close to a monomial. 
Thus a monomial has multiplicative index 1, and a generic
mapping has multiplicative index $q$. 

More precisely, let $\ell$ be a positive divisor of $q-1$ and $\zeta$ a primitive element of $\F_q$. Then the set $C_0$ of
nonzero $\ell$th powers of elements of $\F_q$ is a subgroup of $\F_q^*$ of index $\ell$. The elements of the factor group $\F_q^*/C_0$ are
the cyclotomic cosets
$$C_i = \zeta^i C_0,\quad  i = 0, 1, \ldots, \ell-1.$$
For any positive integer $r$ and any $a_0, a_1, \ldots, a_ {\ell-1} \in \F_q^*$, we define the $r$-th order
cyclotomic mapping $f^r_{a_0,a_1,\ldots,a_{\ell-1}}$ of index $\ell$ by
\begin{equation}\label{minddef}
f^r_{a_0,a_1,\ldots,a_{\ell-1}} (x) =
\left\{\begin{array}{cc}
0 &\mbox{if }x = 0,\\
a_ix^r &\mbox{if }x \in C_i, 0\le   i <\ell.\end{array}\right.
\end{equation}
For a polynomial $f$ over $\F_q$ with $f (0) = 0$ we denote by ${\rm Ind}(f)$ the smallest index
$\ell$ such that $f(x)$ can be represented in the form \eqref{minddef}. If $f(0)\not= 0$, the multiplicative index of $f$ is the multiplicative index of the function $x\mapsto f(x)-f(0)$. The index was introduced by Wang in \cite{wa07}
modifying an earlier concept described in \cite{niwi05}. 

\subsection{Relations between multiplicative index and degree, weight or Carlitz rank}

The interpolation polynomial of a cyclotomic mapping of multiplicative index $\ell$ is of the form
\begin{equation}\label{intpol} (A_{\ell-1}x^{(\ell-1)(q-1)/\ell} +\ldots + A_1x^{(q-1)/\ell} + A_0)x^r\bmod (x^q-x),
\end{equation}
which can be obtained analogously to \cite[Theorem~1]{niwi05}, which is the case $r=1$. 
Note that we may assume $r\le (q-1)/\ell$ since otherwise
\begin{eqnarray*}&&(A_{\ell-1}x^{(\ell-1)(q-1)/\ell} +\ldots + A_1x^{(q-1)/\ell} + A_0)x^r\\
&\equiv& (A_{\ell-2}x^{(\ell-1)(q-1)/\ell} +\ldots + A_0x^{(q-1)/\ell} + A_{\ell-1})x^{r-(q-1)/\ell}\bmod (x^q-x).
\end{eqnarray*}
From \eqref{intpol} we immediately get 
$$\deg(f)\ge \frac{q-1}{{\rm Ind}(f)},\quad {\rm Ind}(f)>1,$$
that is, degree and index cannot be simultaneously small with the exception of functions $f(x)=ax^r$ with a small $r$.
We also get from \eqref{intpol} that
$$
\w(f)\le {\rm Ind}(f).$$

For any $a\in \F_q^*$ the mappings $x\mapsto ax$ and $x\mapsto ax^{q-2}$ have both multiplicative index~$1$ and Carlitz rank $0$ and $1$, respectively.
For any permutation $f$ of  $\F_q$ which does not coincide with such a map on more than $3q^{1/2}$ elements of $\F_q$ we have
$$\crk(f)\ge q-3\max\{{\rm Ind}(f),q^{1/2}\}$$
by \cite[Theorem 1]{iswi18}, that is, Carlitz rank and multiplicative index cannot be simultaneously small for any function which is not close to either a line or a hyperbola.

Finally, we mention that the function $f$ defined by \eqref{dl} and \eqref{dl0} has large multiplicative index 
$${\rm Ind}(f)\ge \frac{q-1}{6}.$$

{\em Proof.} The multiplicative index of this function is by definition the same as the multiplicative index of the function $f$ defined by
$f(\zeta^x)=(x_0+1)+(x_1+1)\lambda+\ldots+(x_{n-1}+1)\lambda^{n-1}$ for any $x\in\{0,\ldots,q-2\}$ together with $f(0)=0$, which is a permutation, that is,
we must have $\gcd(r,(q-1)/\ell)=1$.

Assume $\ell\le (q-1)/3$ and 
$$f(\zeta^{i\ell})=a\zeta^{i\ell r}\quad \mbox{for } i=0,1,\ldots,\frac{q-1}{\ell}-1\ge 2.$$
Taking $i=0,1$ and $(q-1)/\ell-1$ we get 
\begin{eqnarray*}
1+\lambda+\ldots+\lambda^{n-1}&=& a,\\
(\ell_0+1)+(\ell_1+1)\lambda+\ldots+(\ell_{n-1}+1)\lambda^{n-1}&=& a\zeta^{\ell r},\\
(p-\ell_0)+(p-\ell_1)\lambda+\ldots+(p-\ell_{n-1})\lambda^{n-1}&=&a\zeta^{-\ell r},
\end{eqnarray*}
where
$$\ell=\ell_0+\ell_1p+\ldots+\ell_{n-1}p^{n-1},\quad 0\le \ell_0,\ell_1,\ldots,\ell_{n-1}<p.$$
We get
$$\zeta^{\ell r}+\zeta^{-\ell r}-1=0$$
and thus $\zeta^{6\ell r}=1$, which implies, $6 r\equiv 0 \bmod (q-1)/\ell$. Since $\gcd(r,(q-1)/\ell)=1$ we must have $\ell\ge (q-1)/6$. \hfill $\Box$

\subsection{A conjecture on the relation between additive and multiplicative index}
In view of the heuristic that multiplicative objects cannot in general be additively special, and vice-versa, it is natural to imagine that
apart from a few exceptions, functions of low additive index must have high multiplicative index. Below, we formulate a conjecture, deliberately vaguely, since it seems difficult to guess, let alone prove, the right formulation.

\begin{conj}
    Apart from monomials of the form $x\mapsto ax^{p^j}$, which have additive and multiplicative indices equal to 1, any function with low multiplicative index has high additive index and vice versa.
\end{conj}

\begin{remark}

A small-scale version of this conjecture can be stated and proved as follows.\\ 
    
\end{remark}
    If $f(x)$ is not of the form $f(x)=ax^{p^j}$, then we have 
    $$\mbox{either }{\rm Ind}(f)> p \quad \mbox{or }\aind(f)\ge p.$$
    
{\em Proof.}
Since otherwise the result is trivial we may assume 
$\aind(f)=1$. Further assume 
$$f(x)=ax^r=L(x)+c,\quad x\in (C\cup\{0\}),$$
where $C$ is a multiplicative subgroup $C\subseteq\F_q^*$ of order $(q-1)/\ell$, $c\in \F_q$, $a\in\F_q^*$ 
and~$L(x)$ is a linearised polynomial of degree at most $p^{n-1}$. 
Since $f(0)=0$ we have 
$c=0$.

Put
$$F(x)=ax^r-L(x).$$
By assumption we may assume that $f(x)=L(x)$ is not of the form $L(x)=ax^{p^j}$. Hence, $n\ge 2$ and  $F$ is not identically zero. We have
$$\deg(F)=\max\{\deg(L),r\}\ge 1.$$
$F$ has at least $(p^{n}-1)/\ell+1$ zeros and thus
$$\max\{p^{n-1},r\}\ge  \frac{p^{n}-1}{\ell}+1.$$
By \eqref{intpol} we may assume $r\le (q-1)/\ell$, hence
$\ell\ge (p^n-1)/(p^{n-1}-1)>p$, $n\ge 2$. 
\hfill $\Box$

\end{document}